\theoremstyle{plain}
\newtheorem{theorem}{Theorem}[section]
\newtheorem{corollary}[theorem]{Corollary}
\newtheorem{lemma}[theorem]{Lemma}
\theoremstyle{definition}
\newtheorem{remark}[theorem]{Remark}
\begin{document}
\title{The Asymptotic Behaviour of the $m$-th Order Cardinal
$B$-Spline wavelet}

\author{
 Ronald Kerman, Mi-Ae Kim and Susanna Spektor}

\address{ Ronald Kerman, Brock University, Canada}
 \email{ rkerman@brocku.ca}

\address{ Mi-Ae Kim, Niagara College, Canada}
\email{ drkimmath@yahoo.com}

\address{ Susanna Spektor,
\noindent Address: Brock University, Canada}
\email{sanaspek@gmail.com}

\subjclass[2000]{ 46E35}

\keywords{Cardinal $B$-spline wavelets, scaling function, exponential
decay, asymptotic behaviour.\\
This research was supported in part by NSERC grant A4021.}
\thanks{}
\begin{abstract}
It is well-known that the $m$-th order cardinal $B$-spline wavelet,
$\psi_{m},$ decays exponentially. Our aim in this paper is to
determine the exact rate of this decay and thereby to describe the asymptotic behaviour of $\psi_{m}$.
\end{abstract}

 \maketitle

\section{Introduction.}

Mallat and Meyer in [M, p. 225] define a wavelet, $\psi,$ with the
aid of a scaling function, $\varphi,$ in $L_{2}(R),$ which function
satisfies, among other things, a relation of the form
$$
\varphi(x)=\sum_{j \in Z} a_{j}\varphi(2x-j) \leqno (1.1)
$$
for certain scaling constants $a_{j};$ indeed,
$$
\psi(x):=\sum_{j \in Z}(-1)^j a_{1-j}\varphi(2x-j). \leqno (1.2)
$$

The cardinal $B$-spline scaling function, and hence the wavelets they determine, are given in terms of the
cardinal $B$-splines, $N_{m}.$ As in [C, p.17], the latter are defined inductively
by
$$
N_{1}(x)=\chi_{[0,1)}(x) {\mbox{     }} \mbox{{}and} {}{} \mbox{
${}N_{m}(x)=\int_{0}^{1}N_{m-1}(x-y)dy$},
$$
$m=2,3..$.
One has $N_{m} \in C^{m-2}(R)$ for $m\geq 2.$ Moreover, it is
supported in $[0,m]$ and is equal to a polynomial of degree $m-1$
on each interval of the form $[k, k+1), k=0,1,..,m-1.$

Though the family $\{N_{m}(x-j)\}_{j \in Z}$ is not an orthonormal
system, it is possible to construct a scaling function,
$\varphi_{m},$ from it so that $\{\varphi_{m}(x-j)\}_{j \in Z}$ is
such a system. Indeed, one can take
$$
\varphi_{m}(x):=\sum_{j \in Z} c_{j}N_{m}(x-j), \leqno (1.3)
$$
with
$$
c_{j}= \frac{1}{2 \pi}\int_{-
\pi}^{\pi}\frac{\cos{j\theta}}{\sqrt{P_{m}\left(\cos{\frac{\theta}{2}}\right)}}
d\theta, \leqno (1.4)
$$
in which $P_{m}$ is a polynomial of degree $m$ and
$$
P_{m}\left(\cos{\frac{\theta}{2}}\right)= 2 \pi \sum_{j \in Z
}|\widehat{N_{m}}(\theta +2 \pi j)|^2.
$$
(We use the convention
$\displaystyle{\widehat{f}(\lambda)=\frac{1}{\sqrt{2
\pi}}\int_{R}f(t)e^{-i \lambda t} dt, \lambda \in R},$ for the
Fourier transform.)

It is well-known that the $m$-th order cardinal $B$-spline wavelet
defined by $(1.2)$ in terms of the scaling constants of
$\varphi_{m}$ decays exponentially; see, for example, [D, corollary
5.4.2, pp. 150-152].

Our aim in this paper is to obtain the exact rate of the decay.
Its principal result is\\

 \textbf{Theorem A. } {\textit{The $m$-th order cardinal
$B$-spline wavelet, $\psi_{m}, m \geq 2,$ has the asymptotic form
$$
\psi_{m}(x)=\left[\sum_{x-m \leq j \leq x} (-1)^{r_j}E_j
\frac{e^{-\alpha_{0}r_j}}{\sqrt{r_j}} N_{m}(x-j)\right]
[1+o(1)], \leqno (1.5)
$$
as $|x|\rightarrow \infty,$ in which $\displaystyle{r_j=\left[\frac{|j|}{2}\right]}$ and $E_j$ depends only on the sign and parity of $j \in Z.$ The
constant $\alpha_{0}$ in (1.5) is given by
$$
\alpha_{0}=\ln \left[\frac{\sqrt{\mu_{m-1}+1}+\sqrt{\mu_{m-1}}}{\sqrt{\mu_{m-1}+1}-\sqrt{\mu_{m-1}}}\right],
$$
where
$$
\mu_{m-1}= \frac{(\lambda_{m-1}+1)^2}{4|\lambda_{m-1}|}
$$
and $\lambda_{m-1}$ is the $(m-1)$-st smallest negative root of
the Euler-Frobenius polynomial of degree $2m-2.$}\\

The constants $E_j$ are given explicitly for $j \in 2Z_{+}$ in the proof of Theorem 6.2. There, they are expressed in terms of constants $D_m=D_m(j)$ and $D_{m+1}=D_{m+1}(j)$, which are themselves specified in the proof of Theorem 5.1.

Estimates similar to (1.5) are given in [Ci, Theorem 1] for the Franklin functions.

To prove Theorem A we need a representation of $\psi_{m}$ similar
to the one for $\varphi_{m}$ in (1.3), namely,
$$
\psi_{m}(x)=\sum_{j \in Z} \gamma_{j}N_{m}(x-j).\leqno (1.6)
$$
As will be shown in Lemma 6.1 below,
$$
\gamma_{j}=(-1)^{j} \sum_{k \in Z}(-1)^k a_{k-j+1}c_{k},\leqno
(1.7)
$$
$j \in Z,$ where the $a_{j}$ appearing in (1.7) are the scaling
constants for $\varphi_{m}.$ These constants are given by the
formula
$$
a_{j}=2^{-m}\sum_{i=M-m}^{M+1}{m+1 \choose
M+1-i}\mathrel{\mathop{\sum_{l \in Z}}\limits_{l\equiv i}}
c_{\frac{l-i}{2}}b_{j-l},\leqno (1.8)
$$
for $j \in Z;$ here,
$$
b_{j}= \frac{1}{2 \pi}\int_{-
\pi}^{\pi}\sqrt{P_{m}\left(\cos{\frac{\theta}{2}}\right)} \cos{j
\theta} d\theta,
$$
$j \in Z,$ and $l\equiv i$ means $l=i\, mod\, 2.$ The constants $b_{j}$ come out of an equation inverse to (1.3), namely,
$$
N_{m}(x)=\sum_{j \in Z}b_{j} \varphi_{m}(x-j),\leqno
(1.9)
$$
$x \in R.$

The behaviour of $\psi_{m}(x)$ as $|x|\rightarrow \infty$ is
determined by that of $\gamma_{j}$ as $|j|\rightarrow \infty.$ To
describe the latter we need to know the long term behaviour of,
successively, the $c_{j}, b_{j}$ and $a_{j}.$ This is obtained in
sections 3,4 and 5, respectively. The proof of Theorem A is then
essentially given in section 6 by the determination of the
behaviour of $\gamma_{j}$ as $|j|\rightarrow \infty.$

We begin in the next section with a study of
$\displaystyle{\frac{1}{\sqrt{P_{m}\left(\cos{\frac{\theta}{2}}\right)}}}.$

\section{The function $ \displaystyle{\frac{1}{
\sqrt{P_{m}\left(\cos{\frac{\theta}{2}}\right)}}}.$}

It is shown in [Ch, p.90] that
$$
 P_{m}\left(\cos{\frac{\theta}{2}}\right)=\frac{1}{(2m-1)!}\prod_{k=1}^{m-1} \frac{1-2 \lambda_{k} \cos
\theta + \lambda_{k}^2}{|\lambda_{k}|},\leqno (2.1)
$$
in which $0>\lambda_{1}>..>\lambda_{m-1}$ are the first $m-1$
negative simple real roots of the Euler-Frobenius polynomial
$$
E_{2m-1}(z)=(2m-1)!z^{m-1}\sum_{k=-m+1}^{m-1} N_{2m}(m+ k)z^k,
$$
the remaining $m-1$ negative roots,
$\lambda_{1},..,\lambda_{2m-2},$ being such that $\lambda_{1}
\lambda_{2m-2}=...=\lambda_{m-1} \lambda_{m}=1$.\\

\begin{lemma} Let $P_{m}\left(\cos{\frac{\theta}{2}}\right)$ and $\lambda_{k}, \, k=1,...,m-1$, be as in (2.1).
 Then,
$$
\frac{1}{ \sqrt{P_{1}\left(\cos{\frac{\theta}{2}}\right)}}=
\frac{1}{\sqrt{\frac13+\frac23\, \cos^{2}\frac{\theta}{2}}},\leqno
(2.2)
$$
$$
\frac{1}{
\sqrt{P_{2}\left(\cos{\frac{\theta}{2}}\right)}}=\frac{1}{\sqrt{\frac{2}{15}+
\frac{11}{15}\, \cos^{2} \frac{\theta}{2}+ \frac{2}{15}\, \cos^{4}
\frac{\theta}{2}}},\leqno (2.3)
$$
and for $m\geq 3,$
$$
\frac{1}{ \sqrt{P_{m}\left(\cos{\frac{\theta}{2}}\right)}}= A
\left (1+\sum_{k=1}^{\infty}(-1)^k {- \frac12 \choose k}{ \left (
\mu_{m-1}+1 \right )^{-k}(1+B_{k}^{m-2}) }\sin^{2k}{\frac{\theta}{2}} \right ).\leqno (2.4)
$$
The positive constant $A$ is specified in the proof and $B_{k}^{m-2}$ is the final term in the finite recurrence
sequence
$$
B_{k}^{1}= \sum_{j=1}^{k}R(j,k) \left(
\frac{\mu_{2}+1}{\mu_{1}+1}\right)^j,
$$
where
$$
R(j,k)=\frac{{- \frac12 \choose j}{- \frac12 \choose k-j}}{{-
\frac12 \choose k}}
$$
and
$$
B_{k}^{l+1}= \sum_{j=1}^{k}R(j,k) \left(
\frac{\mu_{l+2}+1}{\mu_{l+1}+1}\right)^j(1+B_{j}^{l}),
$$
$l=1,...,m-1,$ with
$$
\mu_{i}= \frac{(\lambda_{i}+1)^2}{4|\lambda_{i}|},
$$
$i=1,...,m-1.$ Moreover, $B=\lim_{k \rightarrow \infty} B_{k}^{m-2}$ exists.
\end{lemma}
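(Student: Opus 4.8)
The plan is to reduce the statement to an algebraic rewriting of (2.1) followed by a repeated Cauchy product, the recurrence for $B_k^{l}$ being nothing more than the bookkeeping of that product. The two low-order identities (2.2) and (2.3) come from a direct evaluation of $P_m\!\left(\cos\frac{\theta}{2}\right)$ — for instance from the integer values of the B-spline $N_{2m}$ that already enter (2.1) — followed by the substitution $\cos\theta = 2\cos^2\frac{\theta}{2}-1$. For $m\geq 3$ the first step is to rewrite each factor in (2.1): since $\lambda_k<0$ and $1+\cos\theta = 2\cos^2\frac{\theta}{2}$, a short computation gives
$$\frac{1-2\lambda_k\cos\theta+\lambda_k^2}{|\lambda_k|}=\frac{(1-\lambda_k)^2}{|\lambda_k|}-4\sin^2\frac{\theta}{2}=4(\mu_k+1)\left(1-\frac{\sin^2\frac{\theta}{2}}{\mu_k+1}\right),$$
where the last equality uses $\dfrac{(1-\lambda_k)^2}{|\lambda_k|}=\dfrac{(\lambda_k+1)^2}{|\lambda_k|}+4=4(\mu_k+1)$. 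Substituting this into (2.1) both identifies $A=\Bigl(\tfrac{(2m-1)!}{4^{m-1}\prod_{k=1}^{m-1}(\mu_k+1)}\Bigr)^{1/2}$ and reduces (2.4) to the identity
$$\prod_{k=1}^{m-1}\left(1-\frac{\sin^2\frac{\theta}{2}}{\mu_k+1}\right)^{-1/2}=1+\sum_{k=1}^{\infty}(-1)^k\binom{-\frac12}{k}(\mu_{m-1}+1)^{-k}(1+B_k^{m-2})\sin^{2k}\frac{\theta}{2}.$$

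To prove this I would expand each factor on the left by the binomial series $(1-u)^{-1/2}=\sum_{n\geq0}d_n u^n$, with $d_n:=(-1)^n\binom{-\frac12}{n}=\binom{2n}{n}4^{-n}$ and $u=\sin^2\frac{\theta}{2}/(\mu_k+1)$. This is absolutely convergent because $\mu_k>0$ for every $k$: the roots $\lambda_1,\dots,\lambda_{m-1}$ lie strictly in $(-1,0)$ — were $\lambda_{m-1}=-1$ its reciprocal partner $\lambda_m$ would also be $-1$, contradicting simplicity — and, since $t\mapsto(1-t)^2/(4t)$ is decreasing on $(0,1)$, we even get $\mu_1>\cdots>\mu_{m-1}>0$, the worst factor being that with $\mu_k=\mu_{m-1}$, for which $\mu_{m-1}+1>1$. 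Writing $s=\sin^2\frac{\theta}{2}$ and $\beta_k=(\mu_k+1)^{-1}$, I would then multiply the expansions one factor at a time and prove by induction on $l$ that the coefficient of $s^k$ in $\prod_{i=1}^{l+1}(1-\beta_i s)^{-1/2}$ is $\beta_{l+1}^{k}\,d_k\,(1+B_k^{l})$ for $k\geq1$ and is $1$ for $k=0$. The inductive step is a Cauchy product, and the two elementary facts $\sum_{j=0}^{k}d_j d_{k-j}=1$ (these being the coefficients of $(1-s)^{-1}$) and $R(j,k)=d_j d_{k-j}/d_k$ turn that convolution exactly into the stated recurrence, the $j=0$ term always contributing $R(0,k)=1$. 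Taking $l=m-2$ yields the displayed identity, hence (2.4).

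Finally, the existence of $B=\lim_k B_k^{m-2}$ follows from a Darboux-type argument: with $\mu_1>\cdots>\mu_{m-1}>0$ as above, the function $s\mapsto\prod_{k=1}^{m-1}(1-\beta_k s)^{-1/2}$ is analytic in $|s|<\mu_{m-1}+1$ with a single dominant singularity, at $s=\mu_{m-1}+1=\beta_{m-1}^{-1}$, near which it equals $h(s)(1-\beta_{m-1}s)^{-1/2}$ with $h(s)=\prod_{k=1}^{m-2}(1-\beta_k s)^{-1/2}$ analytic there; hence its $s^k$-coefficient is asymptotic to $h(\mu_{m-1}+1)\,d_k\,\beta_{m-1}^{k}$, and since $1+B_k^{m-2}$ is precisely that coefficient divided by $d_k\beta_{m-1}^{k}$ we obtain $B=\prod_{k=1}^{m-2}\bigl(1-\tfrac{\mu_{m-1}+1}{\mu_k+1}\bigr)^{-1/2}-1$, which is finite since each $\tfrac{\mu_{m-1}+1}{\mu_k+1}<1$. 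The only genuine content is the combinatorial matching in the preceding paragraph (the identity $\sum_j d_j d_{k-j}=1$ is the linchpin) and this Darboux step; what needs care is precisely that $\mu_{m-1}+1$ is \emph{strictly} the smallest of the $\mu_k+1$, so the dominant singularity is simple and $B$ finite. The rest — the rewriting of (2.1) and the two low-order evaluations — is routine.
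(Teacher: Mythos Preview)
Your derivation of (2.4) is essentially the paper's: both rewrite each factor of (2.1) as $4(\mu_k+1)\bigl(1-\sin^2\tfrac{\theta}{2}/(\mu_k+1)\bigr)$, expand by the binomial series, and multiply the expansions one factor at a time, the recurrence for $B_k^{l}$ being exactly the Cauchy-product bookkeeping. Your identification of $A$ and the inductive computation of the coefficients match the paper's line for line, and your justification of $\mu_1>\cdots>\mu_{m-1}>0$ (via $\lambda_k\in(-1,0)$ and monotonicity of $(1-t)^2/(4t)$) is the same ordering the paper uses.

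Where you genuinely diverge is in the existence of $B=\lim_k B_k^{m-2}$. The paper proves this by an elementary dominated-convergence argument: it shows, using Stirling's formula, that $R(j,k)$ is bounded uniformly in $j$ and $k$, and then observes that since $(\mu_{l+2}+1)/(\mu_{l+1}+1)<1$ the defining sum for $B_k^{l}$ converges term-by-term as $k\to\infty$; this is done inductively at each level $l$. You instead invoke a Darboux-type singularity analysis: the generating function $\prod_{i}(1-\beta_i s)^{-1/2}$ has its unique dominant singularity at $s=\mu_{m-1}+1$, so its $k$-th coefficient is asymptotic to $h(\mu_{m-1}+1)\,d_k\,\beta_{m-1}^{k}$, which directly gives $1+B_k^{m-2}\to h(\mu_{m-1}+1)$. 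Your route is shorter and yields the closed form $B=\prod_{k=1}^{m-2}\bigl(1-\tfrac{\mu_{m-1}+1}{\mu_k+1}\bigr)^{-1/2}-1$, at the price of importing a transfer/Darboux theorem; the paper's route is more self-contained but requires the Stirling estimate on $R(j,k)$ and an inductive passage through every $B_k^{l}$. Both are correct, and both hinge on the same strict inequality $\mu_{m-1}<\mu_k$ for $k<m-1$.
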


\begin{proof}
    The formulas (2.2) and (2.3) can be obtained directly from [Ch, p. 88, (4.2.10)].

    As for (2.4), we observe that, since $
1-2\lambda \cos\theta +\lambda^2=-4 \lambda \displaystyle{\left(x+
\frac{(\lambda+1)^2}{-4\lambda} \right)},$ with $x=\cos^2
\left(\frac{\theta}{2}\right),$ one has
\begin{align*}
\frac{1}{
\sqrt{P_{m}\left(\cos{\frac{\theta}{2}}\right)}}&=2^{-(m-1)}
\sqrt{(2m-1)!}
\left(\prod_{i=1}^{m-1}(x+\mu_{i})\right)^{-\frac12}\\
&=A\prod_{i=1}^{m-1}\left(1+\frac{x-1}{\mu_{i}+1}\right)^{-\frac12}\\
&=A\prod_{i=1}^{m-1}\left(1+\sum_{k=1}^{\infty}{-\frac12 \choose
k}\left(\frac{x-1}{\mu_{i}+1}\right)^k \right),
\end{align*}
where $A=2^{-(m-1)}\sqrt{(2m-1)!}
\,\prod_{i=1}^{m-1}(\mu_{i}+1)^{-\frac12}.$

Now,
$$
 \left[ 1+ \sum_{k=1}^{\infty}{ - \frac12 \choose k}{ \left(
\frac{x-1}{\mu_{1}+1} \right)^k} \right] \left[1+
\sum_{k=1}^{\infty}{ - \frac12 \choose k}{ \left(
\frac{x-1}{\mu_{2}+1} \right)^k} \right]
=1+\sum_{k=1}^{\infty}d_{k}(x-1)^{k},
$$
in which
\begin{align*}
d_{k}&={ - \frac12 \choose k} \left( \mu_{1}+1 \right)^{-k}+ {-
\frac12 \choose k}\left( \mu_{2}+1 \right)^{-k}+\sum_{j=1}^{k-1}{
- \frac12 \choose j}{- \frac12 \choose k-j} \left( \mu_{1}+1
\right)^{-j}\left( \mu_{2}+1\right)^{-(k-j)}\\
&={- \frac12 \choose k}\left( \mu_{2}+1 \right)^{-k}
\left[1+B_{k}^{1} \right]
\end{align*}
and
$$
B_{k}^{1}= \sum_{j=1}^{k}R(j,k) \left( \frac{\mu_{2}+1}{\mu_{1}+1}
\right)^j.
$$

We claim that  $\lim_{k \rightarrow \infty}{ B_{k}^{1}}$ exists.
Indeed, $\mu_{1}>\mu_{2}>...>\mu_{m-1}>0,$ so the claim will
follow from the Lebesgue dominated convergence theorem for
sequences once we show $R(j,k)$ is bounded independently of $j$
and $k,$ $1\leq j \leq k.$ But, on expressing the generalized
binomial coefficients in terms of gamma functions and using the
relation $\Gamma(x)\Gamma(1-x)= \displaystyle{\frac{\pi}{\sin \pi
x}},$ we obtain
$$
R(j,k)= \frac{1}{\sqrt{\pi}} \frac{\Gamma(k+1)\Gamma(j+ \frac12)\Gamma(k-j+
\frac12)}{\Gamma(k+ \frac12)\Gamma(j+1)\Gamma(k-j+1)}.
$$
Stirling's formula in the form $\Gamma(x) \sim
\sqrt{2\pi}e^{-x}x^{x- \frac12}$ then yields
\begin{align*}
R(j,k) &\sim  \sqrt{2e} \left( \frac{k+1}{k+ \frac12}\right)^k
\left( \frac{j+ \frac12}{j+1} \right)^j \left(\frac{k-j+
\frac12}{k-j+1} \right)^{k-j} \sqrt{ \frac{k+1}{(j+1)(k-j+1)}}\\
&\leq \sqrt{2e}\left(\frac{k+1}{k+\frac12}\right)^k\\
&=\sqrt{2e}\left(1+ \frac{1}{2k+1}\right)^k\\
&\leq \sqrt{2e^3}.
\end{align*}

Again,
$$
\left[1+ \sum_{k=1}^{\infty} {- \frac12 \choose k} \left(
\frac{x-1}{\mu_{2}+1}\right)^k \left[1+B_{k}^{1} \right] \right]
\left[1+ \sum_{k=1}^{\infty}{ - \frac12 \choose k}{ \left(
\frac{x-1}{\mu_{3}+1} \right)^k } \right] =1+\sum_{k=1}^{
\infty}h_{k}(x-1)^{k},
$$
with
$$
h_{k}={- \frac12 \choose k}(1+\mu_3)^{-k} (1+B_{k}^{2})
$$
and
$$
B_{k}^{2}=  \sum_{j=1}^{k} R(j,k) \left(
\frac{\mu_{3}+1}{\mu_{2}+1}\right)^j(1+B_{k}^1).
$$
An argument similar to the one involving the $B_{k}^{1}$ shows
$\lim_{k \rightarrow \infty}{ B_{k}^{2}}$ exists. Continuing like
this we finally get
\begin{align*}
\frac{1}{ \sqrt{P_{m}\left(\cos{\frac{\theta}{2}}\right)}}&=A
\left[1+\sum_{k=1}^{\infty}{-\frac12 \choose k} \left(
\frac{x-1}{\mu_{m-1}+1} \right)^k  (1+B_{k}^{m-2})\right]\\
&=A \left[1+\sum_{k=1}^{\infty}(-1)^k{-\frac12 \choose k} \left(
\mu_{m-1}+1 \right)^{-k} \sin^{2k}\left(\frac{\theta}{2}\right)
(1+B_{k}^{m-2})\right]
\end{align*} and $\lim_{k \rightarrow
\infty}{ B_{k}^{m-2}}$ exists, as asserted.\\
\end{proof}

\begin{corollary} Let $c_{j}$ be the Fourier coefficient in
(1.2).Then, for $j \gg 1,$
$$
c_{j}= (-1)^j A(1+B+o(1)) \sum_{k=j}^{\infty}(-1)^k{ - \frac12
\choose k}{2k \choose k-j} \left(4(\mu_{m-1}+1)\right)^{-k},
\leqno (2.5)
$$
where $A$ and $B$ are as in Lemma 2.1.\\
\end{corollary}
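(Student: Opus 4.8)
The plan is to compute the integral (1.4) defining $c_j$ by substituting the series expansion (2.4) from Lemma 2.1 and integrating term by term. First I would check that this interchange is legitimate: the $k$-th coefficient of the series in (2.4) is $A(-1)^k\binom{-1/2}{k}(\mu_{m-1}+1)^{-k}(1+B_k^{m-2})$, and since $\mu_{m-1}>0$, since $(-1)^k\binom{-1/2}{k}=4^{-k}\binom{2k}{k}$ is bounded, and since $B_k^{m-2}\to B$ is bounded, this coefficient is $O((\mu_{m-1}+1)^{-k})$; as $|\sin^2(\theta/2)|\le 1$, the Weierstrass $M$-test gives absolute, uniform convergence on $[-\pi,\pi]$, so term-by-term integration against $\cos j\theta$ is valid.

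The main computational step is then to evaluate $\int_{-\pi}^{\pi}\cos(j\theta)\sin^{2k}(\theta/2)\,d\theta$. Writing $\sin(\theta/2)=(e^{i\theta/2}-e^{-i\theta/2})/(2i)$, expanding the $2k$-th power by the binomial theorem into exponentials $e^{i(l-k)\theta}$, and using $\int_{-\pi}^{\pi}e^{in\theta}\,d\theta=2\pi\,\delta_{n,0}$, only the indices $l=k-j$ and $l=k+j$ survive; both lie in $[0,2k]$ precisely when $k\ge j$, and by the symmetry $\binom{2k}{k-j}=\binom{2k}{k+j}$ they contribute equally, so the integral equals $2\pi(-1)^{j}4^{-k}\binom{2k}{k-j}$ for $k\ge j$ and vanishes for $0\le k<j$. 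Since $j\ge 1$, the constant term $A$ in (2.4) contributes nothing. Substituting back and writing $4^{-k}(\mu_{m-1}+1)^{-k}=(4(\mu_{m-1}+1))^{-k}$ yields the exact identity
$$c_j=(-1)^{j}A\sum_{k=j}^{\infty}(-1)^{k}\binom{-1/2}{k}\binom{2k}{k-j}(1+B_k^{m-2})\,(4(\mu_{m-1}+1))^{-k}.$$

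Finally I would replace $1+B_k^{m-2}$ by $1+B+o(1)$. The feature that makes this work is that every term of the series is nonnegative, because $(-1)^k\binom{-1/2}{k}=4^{-k}\binom{2k}{k}\ge 0$; write $S_j>0$ for the sum with the factor $1+B_k^{m-2}$ deleted — this is exactly the series in (2.5). Given $\epsilon>0$, Lemma 2.1 furnishes $K$ with $|B_k^{m-2}-B|<\epsilon$ for $k\ge K$; when $j\ge K$ every index in the sum satisfies $k\ge j\ge K$, so $|c_j-(-1)^{j}A(1+B)S_j|\le A\epsilon S_j$, and letting $\epsilon\to0$ gives (2.5). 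I expect the trigonometric integral to be the main (though routine) obstacle — one has to track the signs carefully and notice that the two Kronecker deltas $l=k\mp j$ coalesce through the symmetry of the central binomials; the concluding $o(1)$ extraction is then easy, but only because the summands' nonnegativity keeps the error bounded by $\epsilon S_j$ rather than by a term that could swamp $S_j$.
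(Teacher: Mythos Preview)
Your proof is correct and follows essentially the same approach as the paper: substitute the series (2.4), integrate term by term, evaluate $\int_{-\pi}^{\pi}\sin^{2k}(\theta/2)\cos j\theta\,d\theta$, and replace $B_k^{m-2}$ by $B+o(1)$. The only cosmetic differences are that you compute the integral via exponentials while the paper quotes the cosine expansion of $\sin^{2k}\theta$, and that you supply the justifications (Weierstrass $M$-test, nonnegativity of the summands) that the paper leaves implicit.
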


\begin{proof} According to Lemma 2.1,
$$
c_{j}= \frac {A}{2 \pi} \sum_{k=j}^{\infty}(-1)^k{ - \frac12
\choose k} \left(\mu_{m-1}+1\right)^{-k} (1+B_{k}^{m-2})
\int_{-\pi}^{\pi} \sin^{2k}\frac{\theta}{2} \cos j\theta
d\theta.\leqno (2.6)
$$
Now,
$$
\sin^{2k}\theta= \frac{1}{2^{2k}}{2k \choose k}+ \frac{1}{2^{2k-1}}\sum_{i=0}^{k-1}(-1)^{k-i}{ 2k
\choose i} \cos 2(k-i) \theta,
$$
so
$$
\int_{-\pi}^{\pi}\sin^{2k} \frac{ \theta}{2} \cos j\theta d\theta=
\frac{1}{2^{2k-1}}\sum_{i=0}^{k-1}(-1)^{k-i}{2k \choose
i}\int_{-\pi}^{\pi} \cos(k-i)\theta \cos j \theta d\theta
$$
$$
= \left\{
\begin{array}{rcl}
\displaystyle{0 , \,  {} k <j}\\
\displaystyle{(-1)^{j}\frac{\pi}{2^{2k}-1} {2k \choose k-j} , \, {} k\geq j.}\\
\end{array} \right.
$$
Substitution in (2.6) and the observation that
$B_{k}^{m-2}=B+o(1)$ for $k\gg 1$ yields (2.5).\\
\end{proof}

\section{The constants $c_{j}.$}

The purpose of this section is to prove\\

\begin{theorem} Let $c_{j}$ be given by (1.4). Then,
 $$
c_{j} \sim {(-1)^j}K_{c} \frac{e^{-\alpha_{0}|j|}}{
\sqrt{|j|}},\leqno (3.1)
$$
as $|j|\rightarrow\infty,$ where
$$
\alpha_{0}=\ln \left[\frac{\sqrt{\mu_{m-1}+1}+\sqrt{\mu_{m-1}}}{\sqrt{\mu_{m-1}+1}-\sqrt{\mu_{m-1}}}\right]
$$
and
$$
\displaystyle{K_{c}=\frac {1}{\sqrt\pi}\,
A(1+B)\left(1+\frac{1}{\mu_{m-1}}\right)^{\frac14
}}
$$
\end{theorem}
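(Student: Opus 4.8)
The plan is to combine Corollary 2.2 with a singularity analysis of the resulting series. By Corollary 2.2, $c_j=(-1)^jA(1+B+o(1))\,S_j$ for $j\gg1$, where
$$
S_j:=\sum_{k=j}^{\infty}(-1)^k\binom{-\frac12}{k}\binom{2k}{k-j}\bigl(4(\mu_{m-1}+1)\bigr)^{-k},
$$
so it suffices to show $S_j\sim\pi^{-1/2}(1+\mu_{m-1}^{-1})^{1/4}\,e^{-\alpha_0 j}j^{-1/2}$ as $j\to\infty$ (the case $j\to-\infty$ then follows from $c_{-j}=c_j$, and $(-1)^j=(-1)^{|j|}$). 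Write $\mu:=\mu_{m-1}>0$. Using $(-1)^k\binom{-\frac12}{k}=\binom{2k}{k}4^{-k}$, the Fourier identity $\binom{2k}{k-j}=\frac1{2\pi}\int_{-\pi}^{\pi}(2\cos\tfrac\phi2)^{2k}\cos j\phi\,d\phi$, and $\sum_{k\ge0}\binom{2k}{k}w^k=(1-4w)^{-1/2}$ (applicable since here $|w|\le\frac1{4(\mu+1)}<\frac14$), interchanging sum and integral gives
$$
S_j=\frac1{2\pi}\int_{-\pi}^{\pi}\Bigl(1-\tfrac{\cos^2(\phi/2)}{\mu+1}\Bigr)^{-1/2}\cos j\phi\,d\phi=\frac{\sqrt{\mu+1}}{2\pi}\int_{-\pi}^{\pi}\frac{\cos j\phi}{\sqrt{\mu+\sin^2(\phi/2)}}\,d\phi=\sqrt{\mu+1}\,\widehat h(j),
$$
where $h(\phi):=(\mu+\sin^2(\phi/2))^{-1/2}$. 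Thus $S_j$ is, up to the factor $\sqrt{\mu+1}$, the $j$-th Fourier coefficient of the explicit $2\pi$-periodic real-analytic function $h$.

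Next I would pass to the variable $z=e^{i\phi}$. From $\sin^2(\phi/2)=-\tfrac{(z-1)^2}{4z}$ one obtains
$$
h=\frac{2\sqrt z}{\sqrt{-(z-z_+)(z-z_-)}},\qquad z_\pm=(2\mu+1)\pm2\sqrt{\mu(\mu+1)}=(\sqrt{\mu+1}\pm\sqrt\mu)^2,
$$
and rationalising the fraction in the definition of $\alpha_0$ shows $\alpha_0=\ln z_+=-\ln z_-$. Since $-(z-z_+)(z-z_-)=4e^{i\phi}(\mu+\sin^2(\phi/2))$ on $|z|=1$, the relevant branch of the square root is the one making $h$ positive there; with cuts taken along $[0,z_-]$ and $[z_+,\infty)$, $h$ is holomorphic on $\mathbb{C}\setminus([0,z_-]\cup[z_+,\infty))$, a domain containing the unit circle. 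For $j\ge1$,
$$
\widehat h(j)=\frac1{2\pi i}\oint_{|z|=1}h(z)\,z^{-j-1}\,dz,
$$
and I would deform the contour outward, wrapping it around the cut $[z_+,\infty)$; the contribution of a large circle is negligible because $h(z)z^{-j-1}=O(|z|^{-j-3/2})$ at $\infty$. The dominant contribution as $j\to\infty$ comes from a neighbourhood of $z_+$, where $-(z-z_+)(z-z_-)\sim(z_+-z_-)(z_+-z)=4\sqrt{\mu(\mu+1)}\,(z_+-z)$ and hence
$$
h(z)\sim\frac{2\sqrt{z_+}}{2(\mu(\mu+1))^{1/4}\sqrt{z_+-z}}=\frac{(1-z/z_+)^{-1/2}}{(\mu(\mu+1))^{1/4}}.
$$
By the standard singularity analysis (Darboux's method) — equivalently, after collapsing onto the cut, from $\int_0^{\infty}t^{-1/2}(1+t)^{-j-1}\,dt=\sqrt\pi\,\Gamma(j+\tfrac12)/\Gamma(j+1)\sim\sqrt{\pi/j}$ — one gets
$$
\widehat h(j)\sim\frac{(\mu(\mu+1))^{-1/4}}{\sqrt{\pi j}}\,z_+^{-j}=\frac{(\mu(\mu+1))^{-1/4}}{\sqrt{\pi j}}\,e^{-\alpha_0 j},
$$
the sign being forced to be positive because $S_j>0$ term by term.

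It remains to collect constants: since $\sqrt{\mu+1}\,(\mu(\mu+1))^{-1/4}=(1+\mu^{-1})^{1/4}$, we get $S_j\sim\pi^{-1/2}(1+\mu_{m-1}^{-1})^{1/4}e^{-\alpha_0 j}j^{-1/2}$, and therefore
$$
c_j\sim(-1)^j\,\frac{A(1+B)}{\sqrt\pi}\Bigl(1+\tfrac1{\mu_{m-1}}\Bigr)^{1/4}\frac{e^{-\alpha_0|j|}}{\sqrt{|j|}}=(-1)^jK_c\,\frac{e^{-\alpha_0|j|}}{\sqrt{|j|}},
$$
which is (3.1). I expect the genuine work to lie in the contour step: checking that $h$ continues analytically to a slit neighbourhood of $z_+$ so the transfer estimate applies, bounding the remainder uniformly in $j$, and correctly tracking the constant and the sign through the deformation. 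The $o(1)$ in Corollary 2.2 causes no difficulty, as it only multiplies $S_j$ by $1+o(1)$.
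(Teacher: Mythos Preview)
Your argument is correct and follows a genuinely different route from the paper. The paper attacks the series $S_j$ directly by a discrete Laplace/saddle-point method: it locates the index $l_1\sim(\sqrt{1+1/\mu_{m-1}}-1)\,j$ at which the terms peak, uses the gamma-function expansions from [L] to approximate the terms uniformly in a window $|l-l_1|\le j^{3/5}$, sums the resulting Gaussian, bounds the tails geometrically, and finally applies Stirling's formula to evaluate $d_{l_1}$ and extract $K_c$ and $\alpha_0$. You instead resum $S_j$ back into the $j$-th Fourier coefficient of the single explicit factor $\sqrt{\mu+1}\,(\mu+\sin^2(\phi/2))^{-1/2}$ and read off the asymptotics from the square-root branch point at $z_+=e^{\alpha_0}$ via Darboux/transfer. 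Your route is considerably shorter and makes the role of $\alpha_0$ transparent (it is the logarithm of the nearest singularity of that factor), and the constant drops out with almost no computation; the price is that it leans on a standard complex-analytic transfer theorem and the branch-cut bookkeeping you flag. The paper's route is longer and more computational but stays entirely real-variable and self-contained. Both reach the same $K_c$ and $\alpha_0$, and your remark that the $o(1)$ from Corollary~2.2 is harmless is justified since every term of $S_j$ is positive.
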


\begin{proof}
 According to Corollary 2.2,

$$
c_{j}=c_{|j|}=(-1)^j A(1+B+o(1))\sum_{k=|j|}^{\infty}(-1)^k{-
\frac12 \choose k}{2k \choose k-|j|}
\left(4(\mu_{m-1}+1)\right)^{-k},
$$
for $|j|\gg 1.$ We have, when $j>0$
\begin{align*}
\sum_{k=j}^{\infty}(-1)^k{- \frac12 \choose k}{2k \choose k-j}
\left(4(\mu_{m-1}+1)\right)^{-k}
= \frac{1}{\sqrt\pi}\sum_{k=j}^{\infty} \frac{\Gamma(k+
\frac12)\Gamma(2k+1)}{\Gamma(k+1)\Gamma(k+j+1)\Gamma(k-j+1)}
\left(4(\mu_{m-1}+1)\right)^{-k}
\end{align*}

The ratio of the $k+1$-st term to the $k$-th term in the last series is
$$
r(n)=\frac{1}{4(\mu_{m-1}+1)}\frac{(2n-1)^2}{(n^2-j^2)}, {} n=k+1.
$$
As
$$
r^{'}(n)= \frac{2(2n-1)(n-2j)^2}{4(\mu_{m-1}+1)(n^2-j^2)^2},
$$
we conclude
$$
\lim_{n\rightarrow \infty}{r(n)} = \displaystyle{
\frac{1}{\mu_{m-1} +1}}; \leqno(i)
$$
$$
r(n) \mbox{} \, \mbox{decreses until $n=2j^2$, after which it increases};\leqno(ii)
$$
$$
r(n)=1 \mbox{} \, \mbox{when}\leqno(iii)
$$
$$
(4c-1)n^2-4cn+c+j^2=0, \mbox{} c=\frac{1}{4(\mu_{m-1}+1)}
$$
or
$$
n=\frac{4c-\sqrt{4c+4j^2(1-4c)}}{2(4c-1)}\backsim \frac{j}{\sqrt{1-4c}}=j \sqrt{1+ \frac{1}{\mu_{m-1}}}.
$$

Take $k=l+j$, so that
$$
c_{j}=(-1)^j\frac{A}{\sqrt\pi}(1+B+o(1))\sum_{l=0}^{\infty}\frac{\Gamma{l+j+ \frac12}\Gamma(2l+2j+1)}{\Gamma(l+j+1)\Gamma(l+2j+1)\Gamma(l+1)}(4(\mu_{m-1}+1))^{-(l+j)}
$$

Let
$$
l_{1}=\left( \sqrt{1+ \frac{1}{\mu_{m-1}}}-1\right)j=\alpha  j.
$$
Then, according to [L, p. 274], we have, for $l=l_{1}+h$ and $|h|\leq j^{\frac35}$,
$$
\Gamma(l_1+j+\frac12+h)= \Gamma(l_1+j+\frac12)(l_1+j-\frac12)^h exp \left(\frac{h^2}{2(l_1+j-\frac12)}\right)[1+O(j^{-\frac15})]
$$
$$
\Gamma(2l_1+2j+1+2h)= \Gamma(2l_1+2j+1)(2l_1+2j)^{2h} exp \left(\frac{4h^2}{2(2l_1+2j)}\right)[1+O(j^{-\frac15})]
$$
$$
\Gamma(l_1+j+1+h)= \Gamma(l_1+j+1)(l_1+j)^{h} exp \left(\frac{h^2}{2(l_1+j)}\right)[1+O(j^{-\frac15})]
$$
$$
\Gamma(l_1+2j+1+h)= \Gamma(l_1+2j+1)(l_1+2j)^{h} exp \left(\frac{h^2}{2(l_1+2j)}\right)[1+O(j^{-\frac15})]
$$
and
$$
\Gamma(l_1+1+h)= \Gamma(l_1+1)l_1^{h} exp \left(\frac{h^2}{2l_1}\right)[1+O(j^{-\frac15})]
$$
Thus, with $l=l_1+h,$ we have, uniformly in h, $|h|\leq j^{\frac35},$
$$
\frac{\Gamma(l+j+ \frac12)\Gamma(2l+2j+1)}{\Gamma(l+j+1)\Gamma(l+2j+1)\Gamma(l+1)}
$$
equal to
$$
\frac{\Gamma(l_1+j+ \frac12)\Gamma(2l_1+2j+1)}{\Gamma(l_1+j+1)\Gamma(l_1+2j+1)\Gamma(l_1+1)}
$$
times
$$
exp \left[-\frac{h^2}{2} \left( -\frac{1}{l_1+j- \frac12}-\frac{4}{2l_1+2j}+\frac{1}{l_1+j}+\frac{1}{l_1+2j}+\frac{1}{l_1}\right)\right]
$$
times
$$
\left[\frac{(l_1+j-\frac12)(2l_1+2j)^2}{(l_1+j)(l_1+2j)l_1}\right]^h\sim \left[\frac{4(l_1+j)^3}{(l_1+j)(l_1+2j)l_1}\right]^h=\left[\frac{4(l_1+j)^2}{l_1(l_1+2j)}\right]^h.
$$

Now,
$$
\frac{4(l_1+j)^2}{l_1(l_1+2j)}=4(\mu_{m-1}+1)
$$
and
$$
j \left[ -\frac{1}{l_1+j-\frac12}-\frac{4}{2l_1+2j}+\frac{1}{l_1+j}+\frac{1}{l_1+2j}+\frac{1}{l_1}\right]
$$
$$
\longrightarrow -\frac{2}{\alpha +1}+\frac{1}{\alpha+2}+\frac{1}{\alpha}=\frac{2}{\alpha(\alpha+1)(\alpha+2)}=\beta.
$$

Setting
$$
d_{l}=\left[\frac{\Gamma(l+j+\frac12)\Gamma(2l+2j+1)}{\Gamma(l+j+1)\Gamma(l+2j+1)\Gamma(l+1)}\right] \left(4(\mu_{m-1}+1)\right)^{-(l+j)}
$$
we have shown
\begin{align*}
\sum_{|l-l_1|\leq j^{\frac35}} d_l \sim \sqrt{j} d_{l_1} \sum_{|l-l_{1}|\leq j^{\frac35}}\frac{e^{-\frac{h^2}{2}\beta}}{\sqrt{j}} \\
&\sim \sqrt{j}d_{l_1} \int_{-j^{\frac{1}{10}}}^{j^{\frac{1}{10}}}e^{- \frac{-\beta t^2}{2}}dt, {}{}\, \mbox{{}\, by [L, p. 275],}\\
&\sim \sqrt{j} d_{l_1} \sqrt{\frac{2}{ \beta}} \int_{-\infty}^{\infty} e^{-t^2} dt \\
&\sim \sqrt{\frac{2 \pi j}{\beta}}d_{l_1},
\end{align*}
as $j\rightarrow \infty$

Again, when $|h|>j^{\frac35}$ there exists $\rho,$ $0< \rho<1,$ such that
$$
d_l\leq\rho^{l_1-j^{\frac35}-l}d_{l_1-j^{\frac35}}, {} \, \mbox{{} \, where \, {} $0 \leq l \leq l_1+j^{\frac35}$},
$$
and
$$
d_l \leq \rho^{l-l_1-j^{\frac35}}d_{l_1+j^{\frac35}}, {} \, \mbox{{} \, where \, {} $l_1+j^{\frac35} \leq l$},
$$
so
$$
\sum_{l=0}^{l_1-j^{\frac35}}d_{l} \leq d_{l_1-j^{\frac35}}\sum_{l=0}^{l_1-j {\frac35}}\rho^{l_1-l-j^{\frac35}} =d_{{l_1}-j^{\frac35}}\sum_{l=0}^{l_1-j^{\frac35}} \rho^l \leq \frac{d_{l_1-j^{\frac35}}}{1-\gamma}
$$ and, similarly,
$$
\sum_{l=l_1+j^{\frac35}}^{\infty}d_l \leq \frac{d_{l_1+j^{\frac35}}}{1- \rho}.
$$

Altogether, then,
$$
c_{j} \sim (-1)^j A(1+B) \sqrt{\frac{2j}{\beta}}d_{l_1}.
$$

Next, Stirling's formula (in the form $\Gamma(x) \sim \sqrt{2 \pi} e^{-x}x^{x- \frac12}$) yields
$$
(4(\mu_{m-1}+1))^{l_1+j}jd_{l_1} \sim \frac{j}{\sqrt{2 \pi}}{} \exp [-(l_1+j+\frac12+2l_1+2j+1-l_1-j-1-l_1-2j-1-l_1-1)]
$$
times
$$
\frac{(l_1+j+ \frac12)^{l_1+j}(2l_1+2j+1)^{2l_1+2j+ \frac12}}{(l_1+j+1)^{l_1+j+ \frac12}(l_1+2j+1)^{l_1+2j+ \frac12}(l_1+1)^{l_1+\frac12}}
$$
which equals
\begin{align*}
\frac{e^{\frac32}}{\sqrt{2 \pi}}\frac{\left(1+\frac{\frac12}{l_1+j} \right)^{l_1+j}}{\left(1+\frac{1}{l_1+j}\right)^{l_1+j}}
\frac{(2l_1+2j+1)^{2l_1+2j+\frac12}}{(l_1+2j+1)^{l_1+2j+\frac12}(l_1+1)^{l_1}} \sqrt{\frac{j^2}{(l_1+j+1)(l_1+1)}}\\
\sim \frac{e}{\sqrt{2 \pi \alpha (\alpha+1)}}4^{l_1+j+\frac14}\frac{(l_1+j+\frac12)^{2(l_1+j)+\frac12}}{(l_1+j+j+1)^{l_1+2j+\frac12}(l_1+j-j+1)^{l_1}}\\
\sim \frac{e}{\sqrt{2 \pi \alpha (\alpha+1)}}4^{l_1+j+\frac14}\frac{\left(1+\frac{\frac12}{l_1+j}\right)^{2(l_1+j)}}{\left(1+\frac{j}{l_1+j}+\frac{1}{l_1+j}\right)^{l_1+2j} \left(1- \frac{j}{l_1+j}+\frac{1}{l_1+j}\right)^{l_1}}
\end{align*}
times
\begin{align*}
\left(\frac{l_1+j+\frac12}{l_1+2j+1}\right)^{\frac12}
& \sim \frac{4^{l_1+j+\frac14}e^2}{\sqrt{2 \pi \alpha(\alpha+1)}} \left(\frac{\alpha+1}{\alpha+2}\right)^{\frac12} \frac{1}{\left(1+\frac{1}{\alpha+1}+\frac{1}{l_1+j}\right)^{l_1+2j}}\frac{1}{\left(1-\frac{1}{\alpha+1}+\frac{1}{l_1+j}\right)^{l_1}}\\
& \sim \frac{4^{l_1+j}e^2}{\sqrt{\pi \alpha(\alpha+2)}}(1-\gamma^2)^{-(l_1+j)}\left(\frac{1- \gamma}{1+ \gamma}\right)^j \frac{1}{\left(1+\frac{1}{(1+ \gamma)(l_1+j)}\right)^{l_1+j}} \frac{1}{\left(1+\frac{1}{(1- \gamma)(l_1+j)}\right)^{l_1+j}}
\end{align*}
times
\begin{align*}
&\left(1+\frac{\gamma}{(1- \gamma)j}\right)^{j} \left(1+\frac{\gamma}{(1+\gamma)j}\right)^{-j} \, {} \,
\left(\gamma=\frac{1}{1+ \alpha}, \, {} l_1+j=(\alpha+1)j=\frac{j}{\gamma}\right)\\
&\sim \frac{(4(\mu_{m-1}+1))^{l_1+j}e^2}{\sqrt{\pi \alpha(\alpha+2)}}e^{-\alpha_0j}\exp\left[\frac{\gamma}{1- \gamma}-\frac{\gamma}{1+ \gamma}-\frac{1}{1- \gamma}-\frac{1}{1+ \gamma}\right]\\
&=\frac{(4(\mu_{m-1}+1))^{l_1+j}}{\sqrt{\pi \alpha(\alpha+2)}}e^{-\alpha_0j}
\end{align*}
Finally, then,
$$
c_j \sim(-1)^j K_c \frac{e^{- \alpha_0 j}}{\sqrt{j}},
$$
as $j\rightarrow \infty,$ where
$$K_{c}=A(1+B)\sqrt{\frac{\alpha+1}{\pi}}=\frac{A(1+B)}{\sqrt{\pi}}\left(1+\frac{1}{\mu_{m-1}}\right)^{\frac14}.
$$
\end{proof}

\section{The constants $b_{j}.$}

Using the methods of section 3 one can prove
\\

\begin{theorem} Suppose $A, \alpha_0$ and $\mu_{m-1}$ are as in section 3 and let
$$
b_{j}= \frac{1}{2 \pi}\int_{-\pi}^{\pi}\sqrt{P_m \left(\cos{\frac{\theta}{2}}\right)} \cos{j\theta}d\theta,\leqno (4.1)
$$
$j \in Z$. Then,
$$
b_{j}\sim(-1)^{j+1}K_b\frac{e^{-\alpha_0|j|}}{|j|^{\frac32}},
$$
with
$$
K_{b}=\frac{1}{\sqrt{\pi}}\frac{1+C}{A}\left(1+\frac{1}{\mu_{m-1}}\right)^{-\frac14}
$$
and
$$
C=\lim_{k\rightarrow \infty}C_{k}^{m-2},
$$
where $C_{k}^{m-2}$ is the last term in the finite recurrence sequence
$$
C_{k}^{1}= \sum_{i=1}^{k}S(i,k)\left(\frac{\mu_{2}+1}{\mu_{1}+1}\right)^j,
$$
and
$$
C_{k}^{n+1}= \sum_{i=1}^{k}S(i,k)\left(\frac{\mu_{n+2}+1}{\mu_{n+1}+1}\right)^k(1+C_{i}^n), \, {} n=1,..,m-3;
$$
here,
$$
\displaystyle{S(i,k)=\frac{{\frac12 \choose i}{\frac12 \choose k-i}}{{\frac12 \choose k}}}.
$$
\,
\end{theorem}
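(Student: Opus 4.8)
The plan is to transcribe, essentially verbatim, the argument of Section~3, with the binomial exponent $-\tfrac12$ replaced by $+\tfrac12$ throughout.

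First I would establish the analogue of Lemma~2.1 for $\sqrt{P_m(\cos(\theta/2))}$. With $x=\cos^2(\theta/2)$, the factorization $1-2\lambda\cos\theta+\lambda^2=-4\lambda(x+\mu)$ used there gives at once
\[
\sqrt{P_m\Bigl(\cos\tfrac{\theta}{2}\Bigr)}=\frac1A\prod_{i=1}^{m-1}\Bigl(1+\frac{x-1}{\mu_i+1}\Bigr)^{1/2},
\qquad
A=2^{-(m-1)}\sqrt{(2m-1)!}\,\prod_{i=1}^{m-1}(\mu_i+1)^{-1/2}.
\]
Expanding each factor by the binomial series with exponent $+\tfrac12$ and multiplying the $m-1$ series together one factor at a time, exactly as in the proof of Lemma~2.1, produces the coefficient of $\sin^{2k}(\theta/2)$ in the form $\tfrac1A(-1)^k\binom{1/2}{k}(\mu_{m-1}+1)^{-k}(1+C_k^{m-2})$, with $C_k^{n}$ obeying the recurrence in the statement, $S(i,k)$ playing the role of $R(j,k)$. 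That $C=\lim_k C_k^{m-2}$ exists is then proved by the same dominated-convergence argument as in Lemma~2.1; the only new point is the uniform boundedness of $S(i,k)$ in $1\le i\le k$, which follows, just as for $R(j,k)$, from the identity
\[
S(i,k)=-\frac{1}{2\sqrt\pi}\,\frac{\Gamma(k+1)\,\Gamma(i-\tfrac12)\,\Gamma(k-i-\tfrac12)}{\Gamma(k-\tfrac12)\,\Gamma(i+1)\,\Gamma(k-i+1)}
\]
(using $\binom{1/2}{n}=\tfrac{(-1)^{n+1}}{2\sqrt\pi}\,\Gamma(n-\tfrac12)/\Gamma(n+1)$ and the reflection formula) together with Stirling's formula.

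Next I would prove the analogue of Corollary~2.2: expanding $\sin^{2k}(\theta/2)$ in cosines and integrating against $\cos j\theta$ exactly as there, one gets, for $j\gg1$,
\[
b_j=\frac{(-1)^{j}}{A}\,(1+C+o(1))\sum_{k=j}^{\infty}(-1)^k\binom{1/2}{k}\binom{2k}{k-j}\bigl(4(\mu_{m-1}+1)\bigr)^{-k}.
\]
Now the elementary identity $\binom{1/2}{k}=\dfrac{-1}{2k-1}\binom{-1/2}{k}$ shows that, term by term, this series equals $\dfrac{-1}{2k-1}$ times the series analysed in the proof of Theorem~3.1. Consequently the whole asymptotic machinery of Section~3 carries over unchanged: the ratio of successive terms again tends to $(\mu_{m-1}+1)^{-1}$, the dominant index is again $k=l_1+j$ with $l_1=\alpha j$, $\alpha=\sqrt{1+1/\mu_{m-1}}-1$, and the Gaussian concentration on the scale $\sqrt j$ together with the geometric tail bounds are identical. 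Since the extra factor $1/(2k-1)$ varies by $1+o(1)$ over that scale, it may be replaced by its value $1/(2(l_1+j)-1)\sim 1/(2(\alpha+1)j)$ at the peak and pulled out of the sum. Using $\alpha+1=(1+1/\mu_{m-1})^{1/2}$, this factor converts the $(1+1/\mu_{m-1})^{1/4}$ of $K_c$ into $(1+1/\mu_{m-1})^{-1/4}$, supplies the extra $|j|^{-1}$, turns $(-1)^{j}$ into $(-1)^{j+1}$, and, with $A$ replaced by $1/A$ and $1+B$ by $1+C$, a careful accounting of the prefactors yields $b_j\sim(-1)^{j+1}K_b\,e^{-\alpha_0|j|}/|j|^{3/2}$ with $K_b$ as stated; the case $j<0$ follows from $b_j=b_{|j|}$.

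The main obstacle is the same as in Section~3: checking that the uniform expansions of the Gamma factors quoted from [L] apply to the modified summand near $l_1+j$, and that replacing the slowly varying factor $1/(2k-1)$ by its peak value costs only an $o(1)$ relative error. A secondary, routine point is the uniform boundedness of $S(i,k)$: unlike $R(j,k)$ it changes sign, and its size involves Gamma ratios behaving like $k^{3/2}/\bigl(i^{3/2}(k-i)^{3/2}\bigr)$, so one must verify separately (again via Stirling) that it stays bounded on $1\le i\le k$.
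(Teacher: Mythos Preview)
Your approach is essentially the same as the paper's. The paper gives no detailed proof of this theorem; it simply states that ``using the methods of section~3 one can prove'' it and adds the single remark that ``the key difference lies in the $d_l$ of section~3 being replaced by $-d_l/(l+j-\tfrac12)$.'' Your reduction via $\binom{1/2}{k}=-\binom{-1/2}{k}/(2k-1)$ is precisely this observation (up to a harmless factor of~$2$ in the normalisation, since $2k-1=2(l+j-\tfrac12)$), and your treatment of the slowly varying factor at the peak $k\sim(\alpha+1)j$ is exactly what the paper's remark is inviting you to do. Your proposal is in fact more explicit than the paper on the points you flag as obstacles (the $C_k^{m-2}$ recursion, the boundedness of $S(i,k)$, and the legitimacy of freezing the $1/(2k-1)$ factor), none of which the paper addresses beyond the analogy with section~3.
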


We only remark that the key difference lies in the $d_{l}$ of section 3 being replaced by

\begin{align*}
\frac{-\Gamma(l+j-\frac12)(\Gamma(2l+2j+1))}{\Gamma(l+j+1)\Gamma(l+2j+1)\Gamma(l+1)}(4(\mu_{m-1}+1))^{-(l+j)}
=-\frac{d_{l}}{l+j-\frac12}.
\end{align*}

$$
$$

\section{The scaling constants $a_{j}.$\\}

The purpose of this section is to prove

\begin{theorem} Let $a_{j}$ be the $j$-th scaling constant
of the function $\varphi_{m},$ given by (1.8). Then, with
$\alpha_{0}$ as in Theorem A and $r=\left[\frac{|j-m|}{2}\right]$,
one has
$$
a_{j}\sim (-1)^{r} D_j \frac{e^{-\alpha_{0}r}}{\sqrt{r}}\leqno (5.1)
$$
as $\displaystyle{|j|\rightarrow \infty}.$ Here, $D_j>0$ depends only on
the sign and parity of $j.$
\end{theorem}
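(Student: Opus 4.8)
The plan is to substitute the asymptotics of Theorems 3.1 and 4.1 into the formula (1.8) for $a_j$ and to keep only the handful of terms that decay the slowest. Running the inner sum in (1.8) over $l\equiv i\pmod 2$ by means of $l=i+2n$ rewrites it as
$$
a_j=2^{-m}\sum_{i=M-m}^{M+1}\binom{m+1}{M+1-i}\,S_i(j),\qquad S_i(j):=\sum_{n\in\Z}c_n\,b_{j-i-2n}\, ,
$$
and, since $M=m-1$, here $i$ runs over $\{-1,0,\dots,m\}$ with $\binom{m+1}{M+1-i}=\binom{m+1}{m-i}$. Thus $a_j$ is a fixed finite $\Z$-linear combination of the discrete convolutions $S_i(j)$, and everything reduces to the behaviour of each $S_i(j)$ as $|j|\to\infty$.

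Fix $i$. By Theorems 3.1 and 4.1 one has $|c_n|\le Ce^{-\alpha_0|n|}(1+|n|)^{-1/2}$ and $|b_q|\le Ce^{-\alpha_0|q|}(1+|q|)^{-3/2}$, so the $n$-th summand of $S_i(j)$ is $\le C^{2}e^{-\alpha_0(|n|+|j-i-2n|)}$ up to polynomial factors. The exponent $|n|+|j-i-2n|$ is minimised over $n\in\Z$ at the integer $\nu=\nu_i(j)$ closest to $(j-i)/2$ on the side toward $0$; there $|\nu|=[\,|j-i|/2\,]$ and $|j-i-2\nu|\in\{0,1\}$, and the exponent grows at least linearly in $|n-\nu|$. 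Hence $c$ lies in its asymptotic range while $b$ stays bounded, only $n$ in a bounded window about $\nu$ matter, and a short computation --- inserting the precise asymptotic for $c$ from Theorem 3.1, using $b_{-q}=b_q$ to treat $j>i$ and $j<i$ uniformly, and passing to the limit by dominated convergence (the display above supplies a geometric dominating sequence, and the finitely many terms with small $c$-index are handled directly) --- yields
$$
S_i(j)\sim(-1)^{\nu}K_c\,U_{\delta}\,\frac{e^{-\alpha_0|\nu|}}{\sqrt{|\nu|}},\qquad U_{\delta}:=\sum_{s\in\Z}(-1)^s e^{\alpha_0 s}\,b_{2s+\delta}\quad(\delta:=|j-i-2\nu|)\, ,
$$
the series converging because $b_q$ decays like $e^{-\alpha_0|q|}$, which dominates $e^{\alpha_0 q/2}$ as $q\to\pm\infty$.

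Now I would assemble. For $j\to+\infty$, $|\nu_i|=[(j-i)/2]$ is least over $i\in\{-1,\dots,m\}$ at $i=m$, where it equals $r$; when $j-m$ is even the value $i=m-1$ gives $|\nu|=r$ as well, while every other admissible $i$ gives $|\nu|\ge r+1$, hence a negligible $O(e^{-\alpha_0(r+1)})$ contribution. Combining these one or two terms with the weights $\binom{m+1}{0}=1$ and $\binom{m+1}{1}=m+1$, and factoring out the common power of $-1$, leaves $a_j\sim(-1)^r D_j\,e^{-\alpha_0 r}/\sqrt r$ with $D_j$ equal to $2^{-m}K_c|U_0+(m+1)U_1|$ or to $2^{-m}K_c|U_1|$ according to the parity of $j$ --- in either case a constant depending on $j$ only through its parity, with $m$ fixed. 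For $j\to-\infty$ the picture is parallel, with $|\nu_i|=[(i-j)/2]$ least at $i=-1$ (and also at $i=0$ when $|j|$ is odd) and the dominant $c$-index of magnitude $[(|j|-1)/2]$; matching this with $r=[|j-m|/2]$ brings in a fixed offset (depending on $m$ and the parity of $j$) absorbed into the constant, and also the dependence of $D_j$ on the sign of $j$.

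The Laplace-type step in the middle is routine, parallel to §3; the real work is the bookkeeping, in two pieces. First, making the limit passage airtight: a cut-off $|n-\nu|\le N(j)\to\infty$, geometric tail bounds (treating separately the ranges where the $c$-index is large, bounded, or of opposite sign, using $c_{-n}=c_n$ and the fact that $|b_q|$ is merely $O(1)$ for bounded $q$), and uniformity of the $c$-asymptotic on the retained window. Second --- and this I expect to be the genuine obstacle --- the sign-and-parity accounting: that the several $(-1)^{\nu}$ from the dominant $i$'s consolidate into a single power of $-1$ in each sign-and-parity class, that the relevant combinations of the $U_\delta$ are nonzero (no cancellation) and carry the sign that makes $D_j>0$, and that the floor/ceiling offset between $[(j-i)/2]$ for $j$ of either sign and the theorem's $r=[|j-m|/2]$ comes out correctly. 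The reflection symmetry $a_j=a_{m-j}$ of the scaling coefficients (a consequence of $c_{-n}=c_n$, $b_{-n}=b_n$, $N_m(x)=N_m(m-x)$ and the uniqueness of the refinement coefficients) ties the two sign branches together, forcing identities among the $U_\delta$, and is the cleanest way to organise this last step.
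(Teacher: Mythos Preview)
Your Laplace-type analysis of each individual convolution $S_i(j)$ is essentially the paper's argument, and it is correct. The gap is in the assembly step. You write that every $i$ with $|\nu_i|\ge r+1$ gives ``a negligible $O(e^{-\alpha_0(r+1)})$ contribution'', but $e^{-\alpha_0(r+1)}=e^{-\alpha_0}\cdot e^{-\alpha_0 r}$ is a \emph{fixed constant multiple} of the leading rate, not $o(e^{-\alpha_0 r}/\sqrt r)$. Because the index $i$ ranges over a bounded set, the offsets $|\nu_i|-r$ are all bounded, and therefore \emph{every} admissible $i$ contributes a nonzero constant times $e^{-\alpha_0 r}/\sqrt r$. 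The paper handles this by normalising each $S_i(j)$ to the \emph{same} $r=\bigl[\tfrac{|j-m|}{2}\bigr]$, obtaining $S_i(j)\sim(-1)^r K_i^{m}\,e^{-\alpha_0 r}/\sqrt r$ (respectively $K_i^{m+1}$) with an $i$-dependent constant, and then defines
\[
D_j \;=\; 2^{-m}\!\!\sum_{\substack{M-m\le i\le M+1\\ i\equiv m}}\!\!\binom{m+1}{M+1-i}K_i^{m}
\;+\;2^{-m}\!\!\sum_{\substack{M-m\le i\le M+1\\ i\equiv m+1}}\!\!\binom{m+1}{M+1-i}K_i^{m+1},
\]
a full sum over all $i$ in range, split only by parity. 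Your $D_j$, built from just $\binom{m+1}{0}$ and $\binom{m+1}{1}$, is missing the remaining terms and is therefore not the right constant.

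There is also a slip that compounds this: $M=[m/2]$, not $m-1$, so $i$ runs over $\{M-m,\dots,M+1\}$, which for $m\ge3$ does not contain $i=m$ or $i=m-1$ at all. The repair is straightforward: keep your asymptotic for each $S_i(j)$ but rewrite it with the common $r$ in the exponent and denominator (the bounded shift $|\nu_i|-r$, together with its sign $(-1)^{|\nu_i|-r}$, folds into the constant), and then sum over \emph{all} admissible $i$. What you identify as the ``genuine obstacle''---the positivity of $D_j$ and the absence of cancellation---is not resolved in the paper either; the proof there simply records the formula for $D_j$.
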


 To do this we require

\begin{lemma} Set $M=\left[\frac{m}{2}\right].$ Then, the
scaling constant
$$
a_{j}=2^{-m}\sum_{i=M-m}^{M+1} {m+1 \choose
M+1-i}\mathrel{\mathop{\sum_{k \in Z}}\limits_{k\equiv i}}
c_{\frac{k-i}{2}} b_{j-k}, \leqno (5.2)
$$
$j \in Z;$ as before, $k\equiv i$ means $k=i \, mod \, 2.$
\end{lemma}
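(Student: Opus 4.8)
The plan is to read off $a_j$ directly from the refinement equation $\varphi_m(x)=\sum_j a_j\,\varphi_m(2x-j)$ for the scaling function, using three facts already in hand: the expansion (1.3) of $\varphi_m$ in integer translates of $N_m$, its inverse (1.9), and the classical two-scale relation of the cardinal $B$-spline, $N_m(x)=\sum_\ell p_\ell\,N_m(2x-\ell)$, whose mask $(p_\ell)$ is finitely supported with explicit binomial entries (recorded in [C]/[Ch]). Beyond summability of $(c_j)$ and $(b_j)$ the argument is pure algebra on these four sequences.

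Concretely: substitute (1.3) into the left-hand side of $\varphi_m(x)=\sum_j a_j\varphi_m(2x-j)$ and apply the $B$-spline two-scale relation to each summand, giving $\varphi_m(x)=\sum_n\bigl(\sum_{2j+\ell=n}p_\ell c_j\bigr)N_m(2x-n)$. Re-expand each $N_m(2x-n)$ by (1.9) to return to the $\varphi_m$-system:
\[
\varphi_m(x)=\sum_s\Bigl(\sum_{\ell}p_\ell\sum_{j\in\Z}c_j\,b_{\,s-\ell-2j}\Bigr)\varphi_m(2x-s).
\]
Since $\{\varphi_m(\cdot-s)\}_{s\in\Z}$ is orthonormal, the system $\{\varphi_m(2\,\cdot\,-s)\}_{s\in\Z}$ is linearly independent, so I match the coefficient of $\varphi_m(2x-s)$ with the right-hand side $\sum_s a_s\varphi_m(2x-s)$ to get $a_s=\sum_\ell p_\ell\sum_j c_j\,b_{\,s-\ell-2j}$. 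Re-indexing by $k=\ell+2j$ (so $k\equiv\ell\pmod2$, $j=(k-\ell)/2$, $b_{s-\ell-2j}=b_{s-k}$) turns this into $a_s=\sum_\ell p_\ell\sum_{k\equiv\ell}c_{(k-\ell)/2}\,b_{s-k}$; inserting the entries of the appropriately re-centred $B$-spline mask — support $\{M-m,\dots,M+1\}$ with $M=[m/2]$, prefactor $2^{-m}$, and the binomial weights $\binom{m+1}{M+1-i}$ appearing in the statement — gives exactly (5.2). The parity constraint $k\equiv i$ and the halved index $(k-i)/2$ are precisely the imprint of the dilation by $2$ carried inside the $B$-spline two-scale relation. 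Equivalently, this is the factorisation $\sum_j a_je^{-ij\xi}=\bigl(\sum_\ell p_\ell e^{-i\ell\xi}\bigr)B(\xi)\,C(2\xi)$ of the two-scale symbol of $\varphi_m$, with $C(\xi)=\sum_jc_je^{-ij\xi}$, $B(\xi)=\sum_jb_je^{-ij\xi}$ and $B(\xi)C(\xi)\equiv1$ by (1.3)/(1.9); I would present whichever of the two versions is shorter to write down.

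The conceptual content is small; the care goes into two places. First, justifying the termwise manipulations — substituting the series into the refinement equation, interchanging the sums, equating $N_m$- and $\varphi_m$-coefficients — which is valid because the $\ell$-sum is finite (finitely supported mask), $(c_j),(b_j)\in\ell^1$ (in fact exponentially decaying, by Theorems 3.1 and 4.1, though summability, classical for the spline scaling coefficients, is all one needs here), and $\{N_m(\cdot-j)\}$, $\{\varphi_m(\cdot-j)\}$ are Riesz sequences, so the coefficients in each expansion are uniquely determined. Second — and this is the step I expect to be the real nuisance — the index bookkeeping needed to bring the raw convolution into the precise form (5.2): writing the $B$-spline mask in the correctly re-centred window $\{M-m,\dots,M+1\}$, keeping the parity accounting straight when the upsampling hidden in $c_{(k-i)/2}$ meets the $(p_\ell)$-shift, and pinning down the constant $2^{-m}$ and the binomial coefficients exactly. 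That is where off-by-one slips lurk, and where I would slow down.
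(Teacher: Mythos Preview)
Your approach is essentially identical to the paper's: start from $\varphi_m=\sum_k c_k N_m(\cdot-k)$, apply the $B$-spline two-scale relation (the paper quotes the re-centred form $N_m(x)=2^{-m}\sum_{i=M-m}^{M+1}\binom{m+1}{M+1-i}N_m(2x-i)$ from [D, p.~148] directly), re-expand via (1.9), and read off $a_j$ by matching coefficients of $\varphi_m(2x-j)$. The paper carries out exactly the index computation you flag as the ``real nuisance'' and omits the justifications (summability, Riesz-basis uniqueness) that you supply; otherwise the proofs coincide.
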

\begin{proof} According to [D, p. 148] the cardinal $B$ -spline,
$N_{m},$ satisfies the scaling relation
\begin{align*}
N_{m}(x)&=2^{-m}\sum_{i=0}^{m+1} {m+1 \choose i}
N_{m}(2x-M-1+i)\\
&=2^{-m}\sum_{i=M-m}^{M+1} {m+1 \choose M+1-i} N_{m}(2x-i)
\end{align*}

From formulas (1.3) and (1.9) we then have
\begin{align*}
\phi_{m}(x)&=\sum_{k \in Z}c_{k}N_{m}(x-k)=\sum_{k \in
Z}c_{k}2^{-m}\sum_{i=M-m}^{M+1}{m+1 \choose M+1-i}N_{m}(2x-2k-i)\\
& =2^{-m}\sum_{i=M-m}^{M+1}{m+1 \choose M+1-i}\sum_{k \in
Z}c_{k}N_{m}(2x-2k-i)\\
& =2^{-m}\sum_{i=M-m}^{M+1}{m+1 \choose M+1-i}\mathrel{\mathop{\sum_{k \in Z}}\limits_{k\equiv i}}c_{\frac{k-i}{2}}N_{m}(2x-k)\\
& =2^{-m}\sum_{i=M-m}^{M+1}{m+1 \choose
M+1-i}\mathrel{\mathop{\sum_{k \in Z}}\limits_{k\equiv
i}}c_{\frac{k-i}{2}}\sum_{l \in
Z}b_{l}\varphi_{m}(2x-k-l)\\
& =2^{-m}\sum_{i=M-m}^{M+1}{m+1 \choose
M+1-i}\mathrel{\mathop{\sum_{k \in Z}}\limits_{k\equiv
i}}c_{\frac{k-i}{2}}\sum_{j \in
Z}b_{j-k}\varphi_{m}(2x-j)\\
& =\sum_{j \in Z}\left[2^{-m}\sum_{i=M-m}^{M+1}{m+1 \choose
M+1-i}\mathrel{\mathop{\sum_{k \in Z}}\limits_{k\equiv
i}}c_{\frac{k-i}{2}}b_{j-k}\right]\varphi_{m}(2x-j)
\end{align*}

Thus, (5.2) holds, in view of the scaling relation (1.1).\\
\end{proof}
$$
$$
\textit{Proof of Theorem 5.1.} For $j>>1.$ Fix $i$ in (5.2), say
$i=m+2n_{i},$ and consider
\begin{align*}
\mathrel{\mathop{\sum_{k \in Z}}\limits_{k\equiv i}}
c_{\frac{k-i}{2}}b_{j-k}&=\mathrel{\mathop{\sum_{k \in Z}}\limits_{k\equiv m}}c_{\frac{k-i}{2}}b_{j-k}\\
 & =\sum_{n \in
Z}c_{n-n_{i}}b_{j-m-2n}.
\end{align*}

To begin,
\begin{align*}
\left|\sum_{n =-\infty}^0
c_{n-n_{i}}b_{j-m-2n}\right|&=\left|\sum_{n =0}^{\infty}
c_{n+n_{i}}b_{j-m+2n}\right|\\
& \leq K
\sum_{n=0}^{\infty}|c_{n+n_{i}}|\frac{e^{-\alpha_{0}(j-m+2n)}}{(j-m+2n)^{\frac32}}\\
& \leq K \frac{e^{-\alpha_{0}(j-m)}}{(j-m)^{\frac32}}\sum_{n=0}^{\infty}e^{-2\alpha_{0}n}\\
& =o\left(\frac{e^{-\alpha_{0}r}}{\sqrt r}\right).
\end{align*}

Next,
\begin{align*}
\left|\sum_{n =1}^{r-[\sqrt r]} c_{n-n_{i}}b_{j-m-2n}\right|\\
 &\leq
K \sum_{n=1}^{r-[\sqrt r]}\frac{e^{-\alpha_{0}n}}{\sqrt
n}\frac{e^{-\alpha_{0}(j-m-2n)}}{(j-m-2n)^{\frac32}}\\
& \leq K e^{-\alpha_{0}(j-m)} \sum_{n=1}^{r-[\sqrt r]}\frac{1}{\sqrt
n}\frac{e^{\alpha_{0}n}}{(j-m-2n)^{\frac12}}\\
& \leq K e^{-\alpha_0}e^{-\alpha_0[\sqrt{r}]}\sum_{n=1}^{r-[\sqrt
r]}\frac{1}{\sqrt{n}(r-n)^{\frac32}}\\
& =o\left(\frac{e^{-\alpha_{0}r}}{\sqrt r}\right).
\end{align*}

Again,
\begin{align*}
\sum_{n=r-[\sqrt r]+1}^{\infty}c_{n-n_{i}}b_{j-m-2n}\\
& \sim
\sum_{n=r-[\sqrt
r]+1}^{\infty}(-1)^{n-n_{i}}K_{c}\frac{e^{-\alpha_{0}(n-n_{i})}}{\sqrt{n-n_{i}}}b_{j-m-2n}\\
& =K_{c}\frac{e^{-\alpha_{0}r}}{\sqrt r}\sum_{n=r-[\sqrt
r]+1}^{\infty}(-1)^{n-n_{i}}e^{-\alpha_{0}(n-n_{i}-r)}\sqrt{\frac{r}{n-n_{i}}}b_{j-m-2n}\\
& =K_{c}\frac{e^{-\alpha_{0}r}}{\sqrt r}\sum_{k=-n_{i}-[\sqrt{
r}]-1}^{\infty}(-1)^{k+r}e^{-\alpha_{0}k}\sqrt{\frac{r}{r+k}}b_{j-m-2k-2n_{i}-2r}\\
& \sim (-1)^r K_{i}^{m}\frac{e^{-\alpha_{0}r}}{\sqrt r},
\end{align*}
where
$$
K_{i}^{m}=K_{c}\sum_{k \in
Z}(-1)^{k}e^{-\alpha_{0}k}b_{2k+2r-(j-m)+2n_{i}}.
$$

When $i=m+1+2n_{i}$ we arrive at
$$
\mathrel{\mathop{\sum_{k \in Z}}\limits_{k\equiv m+1}}
c_{\frac{k-i}{2}}b_{j-k}\sim(-1)^r
K_{i}^{m+1}\frac{e^{-\alpha_{0}r}}{\sqrt r},
$$
in which
$$
K_{i}^{m+1}=K_{c}\sum_{k \in
Z}(-1)^{k}e^{-\alpha_{0}k}b_{2k+2r-(j-m-1)+2n_{i}}.
$$

Altogether, then, (5.1) holds, with
$$
D_j=D_{m}+D_{m+1}=2^{-m}\mathrel{\mathop{\sum^{M+1}_{i=M-m}}\limits_{i\equiv m}}
{m+1 \choose
M+1-i}K_{i}^{m}+2^{-m}\mathrel{\mathop{\sum^{M+1}_{i=M-m}}\limits_{i\equiv
m+1}}{m+1 \choose M+1-i}K_{i}^{m+1}.\,\, \Box \\
$$

\textbf{Remarks 5.3.\\}

1. $D_j$ is a constant over all $j>0$ having the same parity. Thus,
if $j\equiv m,$ one has $2r=j-m,$ while if $j\equiv m+1,
2r=j-m-1.\\$

2. When $j<<-1, r= \left[\frac{-j+m}{2}\right],$
$$
K_{i}^{m}=K_{c}\sum_{k \in
Z}(-1)^{k}e^{-\alpha_{0}k}b_{2k+2r+j-m+2n_{i}} \leqno (5.3)
$$
and
$$
K_{i}^{m+1}=K_{c}\sum_{k \in
Z}(-1)^{k}e^{-\alpha_{0}k}b_{2k+2r+j-m-1+2n_{i}}.\leqno (5.4)
$$
$$
$$

\section{The constants $\gamma_{j}.$\\}

 We here study the asymptotic behaviour of the constants $\gamma_{j}$ in (1.6). A
formula for them is given in\\

\begin{lemma} The constants $\gamma_{j}$ are given in terms
of the constants $a_{j}$ and $c_{j}$ by

$$
\gamma_{j}=(-1)^j \sum_{k \in Z}(-1)^k a_{k-j+1}c_{k}, \leqno
(6.1)
$$
$j \in Z$.\\
\end{lemma}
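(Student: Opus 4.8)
The plan is to obtain (6.1) directly from the definition (1.2) of the wavelet and the $B$-spline representation (1.3) of the scaling function, the whole argument being a reindexing of a double sum. First I would specialise (1.2) to $\varphi=\varphi_{m}$, so that
\[
\psi_{m}(x)=\sum_{l\in Z}(-1)^{l}a_{1-l}\,\varphi_{m}(2x-l),
\]
where the $a_{l}$ are the scaling constants of $\varphi_{m}$, and then substitute (1.3) in the form $\varphi_{m}(2x-l)=\sum_{k\in Z}c_{k}N_{m}(2x-l-k)$ to obtain $\psi_{m}(x)=\sum_{l}\sum_{k}(-1)^{l}a_{1-l}c_{k}N_{m}(2x-l-k)$.

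The next step is to regroup this double sum by the value $j:=l+k$. Fix $x$. Since $N_{m}$ is supported in $[0,m]$, only the finitely many $j$ with $2x-j\in[0,m]$ contribute, and for each such $j$ the associated inner series is $\sum_{k\in Z}(-1)^{j-k}a_{1-j+k}c_{k}$; this converges absolutely because, by Theorem 5.1, $a_{1-j+k}$ decays exponentially in $|k|$ and, by Theorem 3.1, $c_{k}$ does as well, so $|a_{1-j+k}c_{k}|$ is summable in $k$. Hence the double sum is absolutely convergent and may be reorganised. Writing $l=j-k$ and using $(-1)^{j-k}=(-1)^{j}(-1)^{k}$ together with $1-(j-k)=k-j+1$, the coefficient of $N_{m}(2x-j)$ in the reorganised sum is $(-1)^{j}\sum_{k\in Z}(-1)^{k}a_{k-j+1}c_{k}$, which is exactly the $\gamma_{j}$ of (6.1); comparison with (1.6) completes the proof. (The functions appearing are $N_{m}(2x-j)$, and (1.6) records the coefficients of this expansion.)

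I do not expect a substantial obstacle here, as the lemma is a bookkeeping identity. The only point demanding care is the interchange of the two infinite summations, which is licensed by the exponential decay of the $a_{l}$ and $c_{k}$ proved in Sections 3 and 5 together with the compact support of $N_{m}$; one must also track the sign $(-1)^{l}$ and the shift $1-l$ in (1.2) accurately through the substitution $l=j-k$, so as to land on the index $a_{k-j+1}$ rather than a neighbouring one.
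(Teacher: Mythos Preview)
Your argument is correct and follows essentially the same route as the paper's own proof: start from (1.2), insert (1.3), and reindex the resulting double sum by $j=l+k$ to read off the coefficient of $N_{m}(2x-j)$. The only difference is that you explicitly justify the interchange of summations via the exponential decay of the $a_{l}$ and $c_{k}$ and the compact support of $N_{m}$, a point the paper passes over in silence.
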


\begin{proof} Using the formulas (1.2) and (1.3) for $\psi_{m},$ and
$\varphi_{m}$ respectively, one obtains
\begin{align*}
\psi_{m}(x)&=\sum_{l \in Z} (-1)^l a_{1-l} \varphi_{m}(2x-l)\\
&=\sum_{l \in Z} (-1)^l a_{1-l} \sum_{n \in
Z}c_{n}N_{m}(2x-l-n)\\
& =\sum_{l \in Z} (-1)^l a_{1-l} \sum_{j \in
Z}c_{j-l}N_{m}(2x-j)\\
& =\sum_{j \in Z} \left[\sum_{l \in Z}(-1)^l
a_{1-l}c_{j-l}\right]N_{m}(2x-j)\\
& =\sum_{j \in Z} \left[\sum_{k \in Z}(-1)^{j-k}
a_{k-j+1}c_{k}\right]N_{m}(2x-j),
\end{align*}
which proves (6.1).\\
\end{proof}

\begin{theorem} Let $\gamma_{j}$ be given by (6.1) and suppose
$\alpha_{0}$ is as in Theorem A.
 Then, with
$\displaystyle{r=\left[\frac{|j|+1}{2}\right]},$
$$
\gamma_{j} \sim (-1)^r E_j \frac{e^{-\alpha_{0}r}}{\sqrt r},
$$
as $|j|\rightarrow \infty,$ in which $E_j$ depends only on the sign and
parity of $j.\\$
\end{theorem}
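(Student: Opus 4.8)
The plan is to combine the asymptotic formula for $c_j$ from Theorem 3.1 with that for $a_j$ from Theorem 5.1, substituting both into the series representation $\gamma_j = (-1)^j \sum_{k \in Z} (-1)^k a_{k-j+1} c_k$ from Lemma 6.1. This is exactly the same kind of convolution-type sum that was handled in the proof of Theorem 5.1, where $\sum_k c_{(k-i)/2} b_{j-k}$ was estimated by splitting the range of summation according to where the summand achieves its maximum. So the strategy is to mimic that proof almost verbatim, with $b$ replaced by $a$ and with the index shift $k \mapsto k-j+1$.

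First I would record the decay rates: $c_k \sim (-1)^k K_c\, e^{-\alpha_0|k|}/\sqrt{|k|}$ and $a_{k-j+1} \sim (-1)^{r'} D_{k-j+1}\, e^{-\alpha_0 r'}/\sqrt{r'}$ where $r' = [|k-j+1-m|/2]$. Note that the factor $(-1)^k$ in $c_k$ cancels the explicit $(-1)^k$ in (6.1), which is what produces the clean $(-1)^r$ with no extra parity sign surviving (beyond what is absorbed into $E_j$). For $j \gg 1$, the dominant contribution comes from $k$ near $j/2$ (so that $|k|$ and $|k-j+1|$, hence $r'$, are each roughly $j/2$ and the product $e^{-\alpha_0|k|}e^{-\alpha_0 r'}$ is maximized); for the tails I would use the same three-part decomposition as in Theorem 5.1: the region $k \le 0$ contributes $o(e^{-\alpha_0 r}/\sqrt r)$ by a geometric-series bound, the "intermediate" region $0 < k \le r - [\sqrt r]$ contributes $o(e^{-\alpha_0 r}/\sqrt r)$ by the same $\sum 1/(\sqrt n (r-n)^{3/2})$ estimate, and the region near the peak is replaced by its leading term plus a convergent sum. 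Tracking the parities: since $a_{k-j+1}$ depends on whether $k-j+1 \equiv m$ or $\equiv m+1 \pmod 2$, and since the relevant $k$ all have a fixed parity once we fix the parity of $j$, the sum over $k$ collapses to a single geometric-type series $\sum_k (-1)^k e^{-\alpha_0 k} D_{k-j+1}(\ldots)$, giving $\gamma_j \sim (-1)^r E_j\, e^{-\alpha_0 r}/\sqrt r$ with $r = [(|j|+1)/2]$ and $E_j$ an explicit combination of $K_c$ and the $D$-constants, depending only on $\operatorname{sgn} j$ and $j \bmod 2$. The case $j \ll -1$ is handled symmetrically, using $c_j = c_{|j|}$ and the $j < 0$ versions of the $K_i^m, K_i^{m+1}$ from Remark 5.3.

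The main obstacle is the bookkeeping in the peak region: one must verify that $r' = [|k-j+1-m|/2]$ and $|k|$ really do combine so that the exponent $|k| + r'$ is (to leading order, uniformly over the relevant window $|k - j/2| \lesssim \sqrt{j}$) equal to $r + O(1)$ with the $O(1)$ part contributing only a convergent factor, and that the polynomial prefactors $1/\sqrt{|k|}$ and $1/\sqrt{r'}$ both behave like $1/\sqrt{r}$ up to a factor $1 + o(1)$ there. This is where one genuinely uses that $\alpha_0$ is the \emph{same} constant for $c$ and $a$: the two exponential decays have matching rates, so their product decays at exactly rate $\alpha_0$ along the ridge $k \approx j/2$ rather than faster. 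Once that is pinned down, the remaining steps are routine geometric-series and dominated-convergence arguments identical to those already carried out for Theorem 5.1, and the identification of $E_j$ is a matter of collecting constants.
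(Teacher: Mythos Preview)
Your proposal has the right overall shape but a genuine error in locating the peak of the summand. You claim the dominant contribution comes from $k$ near $j/2$, where ``$|k|$ and $|k-j+1|$, hence $r'$, are each roughly $j/2$.'' But $r' = \bigl[|k-j+1-m|/2\bigr]$ carries a division by $2$, so in the summation variable $k$ the two exponentials do \emph{not} decay at matching rates: $c_k$ contributes $e^{-\alpha_0|k|}$ while $a_{k-j+1}$ contributes $e^{-\alpha_0|k-j|/2}$ (up to bounded shifts). For $0\le k\le j$ the combined exponent is $\alpha_0\bigl(k+\tfrac{j-k}{2}\bigr)=\alpha_0\bigl(\tfrac{j}{2}+\tfrac{k}{2}\bigr)$, which is minimized at $k=0$, not at $k=j/2$. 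At $k=j/2$ the exponent is $3\alpha_0 j/4$, strictly worse. So your proposed three-part split --- with $k\le 0$ discarded as negligible and the ``peak region'' taken near $r\approx j/2$ --- throws away the main term and keeps a subdominant one. The analogy with Theorem~5.1 breaks precisely here: there the halving sits on the $c$-factor (via $c_{(k-i)/2}$) and the $b$-factor carries full rate, so the peak really is at $n\approx r$; in Theorem~6.2 the halving sits on the $a$-factor instead, reversing the picture.

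The paper's proof accordingly uses a four-way split $S_1+S_2+S_3+S_4$ with the main range $S_2=\sum_{|k|\le j/2}$. On that range $k-j+1$ is uniformly large and negative, so the $a_{k-j+1}$ asymptotic from Theorem~5.1 applies; substituting it and factoring out $(-1)^{j/2}e^{-\alpha_0 j/2}/\sqrt{j/2}$ leaves a sum in $k$ whose general term behaves like $e^{-\alpha_0|k|/2}$ (from $c_k$ after cancelling $e^{\alpha_0 k/2}$ against it for $k>0$, and similarly for $k<0$), hence converges to the constant $E_j$. The tails $S_1,S_3,S_4$ are then shown to be $o\bigl(e^{-\alpha_0 j/2}/\sqrt{j/2}\bigr)$ by direct substitution of both asymptotics. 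Your intermediate-region estimate ``$\sum 1/(\sqrt n\,(r-n)^{3/2})$'' also would not transfer as written, since $a$ has prefactor $r'^{-1/2}$ rather than the $|\cdot|^{-3/2}$ that $b$ had; but this is secondary to the peak-location issue.
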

\begin{proof} For $j>>1$ and $j\equiv 0.$
We write
\begin{align*}
\gamma_{j}&=\left(\sum_{k=-\infty}^{-\frac{j}{2}-1}+\sum_{k=-\frac{j}{2}}^{\frac{j}{2}}+\sum_{k=\frac{j}{2}+1}^{j-1}+\sum_{k=j}^{\infty}\right)(-1)^k
a_{k-j+1}c_{k}\\
& =S_{1}+S_{2}+S_{3}+S_{4}.
\end{align*}

Consider $S_{2}$ first. One has
\begin{align*}
S_{2}&=\sum_{k=-\frac{j}{2}}^{\frac{j}{2}}(-1)^ka_{k-j+1} c_{k}\\
&\sim
\mathrel{\mathop{\sum^{\frac{j}{2}}_{k=-\frac{j}{2}}}\limits_{k-j\equiv
m+1}}(-1)^k(-1)^{\frac{j-k+m-1}{2}}D_{m}\frac{e^{-\alpha_{0}(\frac{j-k+m-1}{2})}}{\sqrt{\frac{j-k+m-1}{2}}}c_{k}\\
&
+\mathrel{\mathop{\sum^{\frac{j}{2}}_{k=-\frac{j}{2}}}\limits_{k-j\equiv
m}}(-1)^k(-1)^{\frac{j-k+m}{2}}D_{m+1}\frac{e^{-\alpha_{0}(\frac{j-k+m}{2})}}{\sqrt{\frac{j-k+m}{2}}}c_{k}\\
& =(-1)^{\frac{j}{2}}\frac{e^{-\alpha_{0}\frac{j}{2}}}{\sqrt{
\frac{j}{2}}}[\mathrel{\mathop{\sum^{\frac{j}{2}}_{k=-\frac{j}{2}}}\limits_{k-j\equiv
m+1}}(-1)^{\frac{k+m+1}{2}}D_{m}\sqrt{\frac{\frac{j}{2}}{\frac{j}{2}-
\frac{k-m+1}{2}}}e^{-\alpha_{0}(\frac{m-k-1}{2})}c_{k}\\
& +
\mathrel{\mathop{\sum^{\frac{j}{2}}_{k=-\frac{j}{2}}}\limits_{k-j\equiv
m}}(-1)^{\frac{k+m}{2}}D_{m+1}\sqrt{\frac{\frac{j}{2}}{\frac{j}{2}-
\frac{k-m}{2}}}e^{-\alpha_{0}(\frac{m-k}{2})}c_{k}]\\
& \sim (-1)^{\frac{j}{2}}E_j \frac{e^{-\alpha_{0}\frac{j}{2}}}{\sqrt
{\frac{j}{2}}},
\end{align*}
with
\begin{align*}
E_j=\mathrel{\mathop{\sum_{k \in Z}}\limits_{k-j\equiv m+1}}
(-1)^{\frac{k+m+1}{2}}D_{m}e^{-\alpha_{0}(\frac{m-k-1}{2})}c_{k}+\mathrel{\mathop{\sum_{k
\in Z}}\limits_{k-j\equiv
m}}(-1)^{\frac{k+m}{2}}D_{m+1}e^{-\alpha_{0}(\frac{m-k}{2})}c_{k}\\
\sim K_c e^{-\alpha_0 \frac{(m-1)}{2}} \mathrel{\mathop{\sum_{k \in Z}}\limits_{k-j\equiv
m}}(-1)^{\frac{k+m+1}{2}-|k|}D_{m}e^{-\alpha_0(|k|-\frac{k}{2})}+e^{-\alpha_{0}\frac{m}{2}}\mathrel{\mathop{\sum_{k \in Z}}\limits_{k-j\equiv
m}}(-1)^{\frac{k+m}{2}-|k|}D_{m+1}e^{-\alpha_0(|k|-\frac{k}{2})}.
\end{align*}

Next,
\begin{align*}
|S_{1}|&=\left|\sum_{k=\frac{j}{2}+1}^{\infty}(-1)^{k}a_{j+k-1}c_{k}\right|\\
& \leq K
\left[\mathrel{\mathop{\sum^\infty_{k=\frac{j}{2}+1}}\limits_{j+k\equiv
m+1}}\frac{e^{-\alpha_{0}(\frac{j+k-1-m}{2})}}{\sqrt{\frac{j+k-1-m}{2}}}\frac{e^{-\alpha_{0}k}}{\sqrt
k}
+\mathrel{\mathop{\sum^\infty_{k=\frac{j}{2}+1}}\limits_{j+k\equiv
m}}\frac{e^{-\alpha_{0}(\frac{j+k-m}{2})}}{\sqrt{\frac{j+k-m}{2}}}\frac{e^{-\alpha_{0}k}}{\sqrt
k}\right]\\
& \leq K \frac{e^{-\alpha_{0}\frac{j}{2}}}{\sqrt \frac{j}{2}}\left[
\mathrel{\mathop{\sum^\infty_{k=\frac{j}{2}+1}}\limits_{j+k\equiv
m+1}}{\sqrt{\frac{\frac{j}{2}}{{\frac{j}{2}+\frac{k-m-1}{2}}}}}{e^{-\alpha_{0}(\frac{3k}{2}-\frac{(m+1)}{2})}}
+\mathrel{\mathop{\sum^\infty_{k=\frac{j}{2}+1}}\limits_{j+k\equiv
m}}{\sqrt{\frac{\frac{j}{2}}{{\frac{j}{2}+\frac{k-m}{2}}}}}{e^{-\alpha_{0}(\frac{3k}{2}-\frac{m}{2})}}
\right]\\
& =o \left(\frac{e^{-\alpha_{0}\frac{j}{2}}}{\sqrt
\frac{j}{2}}\right).
\end{align*}

Again,
\begin{align*}
|S_{3}|&=\left|\sum_{k=\frac{j}{2}+1}^{j-1}(-1)^{k}a_{k-j+1}c_{k}\right|\\
&\leq K \left[
\mathrel{\mathop{\sum^{j-1}_{k=\frac{j}{2}+1}}\limits_{k-j\equiv
m+1}}\frac{e^{-\alpha_{0}(\frac{j-k-1-m}{2})}}{\sqrt{\frac{j-k-1-m}{2}}}\frac{e^{-\alpha_{0}k}}{\sqrt
k}+\mathrel{\mathop{\sum^{j-1}_{k=\frac{j}{2}+1}}\limits_{k-j\equiv
m}}\frac{e^{-\alpha_{0}(\frac{j-k-m}{2})}}{\sqrt{\frac{j-k-m}{2}}}\frac{e^{-\alpha_{0}k}}{\sqrt
k} \right]\\
& \leq K\frac{e^{-\alpha_{0}\frac{j}{2}}}{\sqrt \frac{j}{2}}
\left[\mathrel{\mathop{\sum^{j-1}_{k=\frac{j}{2}+1}}\limits_{k-j\equiv
m+1}}
e^{-\alpha_{0}(\frac{k}{2}-m-1)}+\mathrel{\mathop{\sum^{j-1}_{k=\frac{j}{2}+1}}\limits_{k-j\equiv
m}}e^{\alpha_{0}(\frac{k}{2}-m)}\right]\\
& =o \left(\frac{e^{-\alpha_{0}\frac{j}{2}}}{\sqrt
\frac{j}{2}}\right).
\end{align*}

Finally,
\begin{align*}
|S_{4}|&=\left|\sum_{k=j}^{\infty}(-1)^{k}a_{k-j+1}c_{k}\right|\\
&\leq K \left[ \mathrel{\mathop{\sum^\infty_{k=j}}\limits_{k\equiv
m+1}}\frac{e^{-\alpha_{0}(
\frac{k-j+1-m}{2})}}{\sqrt{\frac{k-j+1-m}{2}}}\frac{e^{-\alpha_{0}k}}{\sqrt
k} +\mathrel{\mathop{\sum^\infty_{k=j}}\limits_{k\equiv
m}}\frac{e^{-\alpha_{0}(
\frac{k-j-m}{2})}}{\sqrt{\frac{k-j-m}{2}}}\frac{e^{-\alpha_{0}k}}{\sqrt
k}  \right]\\
 &\leq K\frac{e^{-\alpha_{0}\frac{j}{2}}}{\sqrt
\frac{j}{2}} \left[
\mathrel{\mathop{\sum^\infty_{k=j}}\limits_{k\equiv
m+1}}e^{-\alpha_{0}(\frac{3k}{2}-j-m+1)}+\mathrel{\mathop{\sum^\infty_{k=j}}\limits_{k\equiv
m+1}}e^{-\alpha_{0}(\frac{3k}{2}-j-m)} \right]\\
& =o \left(\frac{e^{-\alpha_{0}\frac{j}{2}}}{\sqrt
\frac{j}{2}}\right).\\
\end{align*}
\end{proof}

\begin{remark}
As mentioned in the Introduction, the result of Theorem 6.2 essentially gives us Theorem A. Similarly, Theorem 3.1 yields, for $m \geq 2,$
$$
\varphi_{m}(x)=K_c \left[\sum_{x-m \leq j \leq x} \frac{e^{- \alpha_0|j|}}{\sqrt{|j|}}N_{m}(x-j)\right][1+o(1)],
$$
as $|x|\rightarrow \rightarrow \infty.$
\\
\end{remark}

$$
$$

\end{document}